\let\csname equation*\endcsname\relax
\let\csname endequation*\endcsname\relax
\def\softd{{\leavevmode\setbox1=\hbox{d}%
\hbox to 1.05\wd1{d\kern-0.4ex{\char039}\hss}}}%cstocs
\def\softt{{\leavevmode\setbox1=\hbox{t}%
\hbox to \wd1{t\kern-0.6ex{\char039}\hss}}}%cstocs
\newcommand{\ee}{\mathrm{e}}
\newcommand{\D}{\mathrm{d}}
\newcommand{\C}{\mathbb{C}}
\newcommand{\N}{\mathbb{N}}
\newcommand{\R}{\mathbb{R}}
\newcommand{\CC}{\mathcal{C}}
\newcommand{\DD}{\mathcal{D}}
\newcommand{\OO}{\mathcal{O}}
\newcommand{\RR}{\mathcal{R}}
\newcommand{\VV}{\mathcal{V}}
\newtheorem{theorem}{Theorem}[section]
\newtheorem{lemma}{Lemma}[section]
\newtheorem{proposition}{Proposition}[section]
\newtheorem{remark}{Remark}[section]
\begin{document}

\title[Soft quantum waveguides]
{Spectral properties of soft quantum waveguides}

\author{Pavel Exner}
\address{Nuclear Physics Institute, Czech Academy of Sciences,
Hlavn\'{i} 130, \\ 25068 \v{R}e\v{z} near Prague, Czech Republic}
\address{Doppler Institute, Czech Technical University, B\v{r}ehov\'{a} 7, 11519 Prague, \\ Czech Republic}
\ead{exner@ujf.cas.cz}

\begin{abstract}
We consider a soft quantum waveguide described by a two-dimensional Schr\"odinger operators with an attractive potential in the form of a channel of a fixed profile built along an infinite smooth curve which is not straight but it is asymptotically straight in a suitable sense. Using Birman-Schwinger principle we show that the discrete spectrum of such an operator is nonempty if the potential well defining the channel profile is deep and narrow enough. Some related problems are also mentioned.
\end{abstract}

% Uncomment for PACS numbers
\pacs{03.65.Ge, 03.65Db}

%
% Uncomment for keywords
\vspace{2pc} \noindent{\it Keywords}: Quantum waveguides, Schr\"odinger operators, regular potentials, Birman-Schwinger principle, discrete spectrum

%
% Uncomment for Submitted to journal title message
\submitto{\JPA}

%
% Uncomment if a separate title page is required
%\maketitle
%
% For two-column output uncomment the next line and choose [10pt] rather than [12pt] in the \documentclass declaration
%\ioptwocol
%

%%%%%%%%%%%%%%%%%%%%%%%%%%%%%%%%%%%%%
\section{Introduction} %\label{s:intro}
\setcounter{equation}{0}

Properties of motion confined to regions infinitely extended in some direction, and of a constant `size' in the other, attracted over the years a lot of attention. The original motivation came from quantum mechanics where such systems appeared naturally as models of electrons in semiconductor wires, atoms in hollow optical fibers, and other waveguide-type systems \cite{EK15}, but related effects were also investigated, theoretically and experimentally, for instance in electromagnetism \cite{LCM99} or acoustics \cite{DP98}. The most common framework for such a waveguide analysis is based on Schr\"odinger operators in tubular regions with appropriate conditions at their boundary; for a recent survey of the existing results we refer to the monograph \cite{EK15}.

An alternative approach, for which names like `leaky quantum wires' or similar is used, is motivated by the fact that in actual quantum systems the confinement is rarely perfect, and tunneling between different parts of the structure is possible. It works with singular Schr\"odinger operators which can be formally written as $-\Delta-\alpha\delta(x-\Gamma)$ with $\alpha>0$, where the interaction support $\Gamma$ is a curve, a graph, or more generally, a complex of lower dimensionality \cite[Chap.~10]{EK15}. In this situation the particle can be found everywhere, but the states with the energy support in the negative part of the spectrum are localized in the vicinity of $\Gamma$.

The two model classes share some properties. A notable one among them is the existence of \emph{curvature-induced bound states}. If such a tube, or the set $\Gamma$ supporting the attractive interaction, is not straight but it is straight outside a bounded region, or it is at least asymptotically straight in a suitable sense, then the system exhibits localized states associated with discrete eigenvalues below the continuum, their number and location being determined by the geometry of the system.

One can note that both the described system types contain a degree of idealization. We have mentioned already that the tube walls may not be impenetrable. On the other hand, real guides we try to model are often thin but never infinitely thin. A more realistic way to describe this sort of physical constraints would be to use an attractive potential, but a regular one in the form of a ditch, or channel, of a finite width. This is the topic we are going to address in this paper. The first question that naturally arises is whether in such a situation again `bending can cause binding', that is, whether the Hamiltonian with a bent potential channel would have isolated eigenvalues.

We are going to discuss this problem in the simplest two-dimensional setting where the potential channel has a finite width. There are naturally many extensions to the question posed in this way, and we will mention them briefly in the concluding remarks. The tool we choose is the Birman-Schwinger method which has the advantage that it allows us employ the `straightening trick' known from the theory of hard-wall waveguides. This is not the only possibility, of course, one can think, for instance, of a variational method, but that could constitute the contents of another paper. Our main result, Theorem~\ref{thm:boundstate}, is a sufficient condition for the existence of curvature-induced bound states which shows, in particular, that such states exist provided the potential well determining the channel profile is deep and narrow enough.

Before proceeding, let us briefly describe the contents of the paper. The problem is stated in the next section where the assumption we use are formulated. In Section~\ref{s:essential} we combine the bracketing technique and Weyl criterion in order to determine the essential spectrum of the Schr\"odinger operators in question; it is vital to show that it is preserved under the geometric perturbations we are considering. Before coming to our main topic, we mention in Section~\ref{s:asympt} two simple asymptotic results establishing the discrete spectrum existence based on the known facts about the hard-wall waveguides and leaky wires. The main part of the paper are Sections~\ref{s:BS}--\ref{another} where we apply Birman-Schwinger method to our case, prove the main result, and discuss its consequences. We finish the paper with a survey of open questions related to the present problem, noting with pleasure a result on a similar problem in the three-dimensional setting that appeared very recently \cite{EKP20}.

%%%%%%%%%%%%%%%%%%%%%%%%%%%%%%%%%%%%%
\section{Statement of the problem} %\label{s:statement}
\setcounter{equation}{0}

Let $\Gamma$ be an infinite and smooth planar curve without self-intersections, naturally parametrized by its arc length $s$, that is, the graph of a function $\Gamma:\:\R\to\R^2$; with the abuse of notation we employ the same symbol for the map $\Gamma$ and for its range. Writing $\Gamma$ in the Cartesian coordinates we introduce the signed curvature $\gamma:\: \gamma(s)= (\dot\Gamma_2\ddot\Gamma_1 - \dot\Gamma_1\ddot\Gamma_2)(s)$ where the dot conventionally denotes the derivative with respect to $s$. The curve is supposed to satisfy the following assumptions:
 % ------------- %
 \begin{enumerate}[(a)]
 \setlength{\itemsep}{0pt}
\item $\Gamma$ is $C^2$-smooth so, in particular, $\gamma(s)$ makes sense, \label{assa}
\item $\gamma$ is either of compact support, $\mathrm{supp}\,\gamma \subset [-s_0,s_0]$ for some $s_0>0$, or $\Gamma$ is $C^4$-smooth and $\gamma(s)$ together with its first and second derivatives tend to zero as $|s|\to\infty$, \label{assb}
\item $|\Gamma(s)-\Gamma(s')|\to \infty$ holds as $|s-s'|\to\infty$. \label{assc}
 \end{enumerate}
 % ------------- %
The last assumption excludes U-shaped curves and their various modifications. The knowledge of $\gamma$ allows us to reconstruct the curve uniquely, up to Euclidean transformations: putting $\beta(s_2,s_1):= \int_{s_1}^{s_2} \gamma(s)\,\D s$, we have
 % ------------- %
 \begin{equation} \label{reconstruction}
 \Gamma(s) = \Big( x_1 + \int_{s_0}^s \cos\beta(s_1,s_0)\,\D s_1, x_2 - \int_{s_0}^s \sin\beta(s_1,s_0)\,\D s_1 \Big)
 \end{equation}
 % ------------- %
for some $s_0\in\R$ and $x=(x_1,x_2)\in\R^2$. Next we define the strip $\Omega^a$ in the plane, of halfwidth $a>0$, built over $\Gamma$ as
 % ------------- %
 $$ %\begin{equation} \label{strip}
\Omega^a := \{ x\in\R^2:\:\mathrm{dist}(x,\Gamma) < a \},
 $$ %\end{equation}
 % ------------- %
in particular, $\Omega^a_0 := \R\times (-a,a)$ corresponds to a straight line for which we use the symbol $\Gamma_0$. We assume that
 % ------------- %
 \begin{enumerate}[(a)]
 \setcounter{enumi}{3}
 \setlength{\itemsep}{0pt}
\item $a\|\gamma\|_\infty<1$ holds for the strip halfwidth of $\Gamma$. \label{assd}
 \end{enumerate}
 % ------------- %
This ensures that the points of $\Omega^a$ can be uniquely parametrized by the arc length and the distance from $\Gamma$ as follows,
 % ------------- %
 \begin{equation} \label{strip}
x(s,u) = \big(\Gamma_1(s)-u\dot\Gamma_2(s), \Gamma_2(s)+u\dot\Gamma_1(s) \big),
 \end{equation}
 % ------------- %
which constitute a natural locally orthogonal system of coordinates on $\Omega^a$, $N(s) = (-\dot\Gamma_2(s),\dot\Gamma_1(s))$ being the unit normal vector to $\Gamma$ at the point $s$.

The main object of our interest are Schr\"odinger operators with an attractive potential supported in the strip $\Omega^a$. To introduce it we consider
 % ------------- %
 \begin{enumerate}[(a)]
 \setcounter{enumi}{4}
 \setlength{\itemsep}{0pt}
 \item a nonzero $V\ge 0$ from $L^\infty(\R)$ with $\mathrm{supp}\,V\subset [-a,a]$ \label{asse}
 \end{enumerate}
 % ------------- %
and define
 % ------------- %
 \begin{subequations}
 \begin{align}
 \tilde{V}:\: \Omega^a & \to\R_+,\quad \tilde{V}(x(s,u))=V(u), \label{potential} \\[.5em]
 H_{\Gamma,V} & = -\Delta-\tilde{V}(x); \label{Hamiltonian}
 \end{align}
 \end{subequations}
 % ------------- %
in view of assumption \eqref{asse} the operator domain is $D(-\Delta)=H^2(\R^2)$. It is also useful to introduce the comparison operator on $L^2(\R)$,
 % ------------- %
 \begin{equation} \label{transop}
h_V = -\partial_x^2-V(x)
 \end{equation}
 % ------------- %
with the domain $H^2(\R)$ which has in accordance with \eqref{asse} a nonempty and finite discrete spectrum such that
 % ------------- %
 \begin{equation} \label{infsph}
\epsilon_0 := \inf\sigma_\mathrm{disc}(h_V) = \inf\sigma(h_V)\in \big(-\|V\|_\infty,0\big).
 \end{equation}
 % ------------- %
Moreover, we know that $\epsilon_0$ is a simple eigenvalue and the associated eigenfunction $\phi_0\in H^2(\R)$ can be chosen strictly positive; we will use the same symbol for this function and its restriction to the interval $(-a,a)$. The relation \eqref{infsph} helps us to find the spectrum of $H_{\Gamma,V}$ in the situation when the generating curve is a straight line because in that case the variables separate and we have
 % ------------- %
 \begin{equation} \label{straightsp}
\sigma(H_{\Gamma_0,V}) = \sigma_\mathrm{ess}(H_{\Gamma_0,V}) = [\epsilon_0,\infty).
 \end{equation}
 % ------------- %

The question we address in this paper is about the spectrum of the operator $H_{\Gamma,V}$ in the situation where $\Gamma$ satisfies the assumptions \eqref{assa}--\eqref{asse} and \emph{is not straight}. An example of particular interest is the soft \emph{flat-bottom} waveguide referring to the function
 % ------------- %
 \begin{equation} \label{flatbottom}
V_{J,0}(u) = V_0\chi_J(u),\quad V_0>0,
 \end{equation}
 % ------------- %
where $\chi_J$ is the indicator function of an interval $J=[-a_1,a_2] \subset [-a_0.a_0]$.

%%%%%%%%%%%%%%%%%%%%%%%%%%%%%%%%%%%%%
\section{The essential spectrum}\label{s:essential}
\setcounter{equation}{0}

If the potential ditch is straight outside a compact, or at least asymptotically straight in the sense of \eqref{assb}, the essential spectrum is preserved.

 % ------------- %
\begin{proposition} \label{prop:essential}
Under assumptions \eqref{assa}--\eqref{asse} we have $\sigma_\mathrm{ess}(H_{\Gamma,V}) = [\epsilon_0,\infty)$.
\end{proposition}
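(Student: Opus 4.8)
The plan is to prove the claim by a two-sided argument: the inclusion $[\epsilon_0,\infty)\subseteq\sigma_\mathrm{ess}(H_{\Gamma,V})$ via Weyl's criterion, and the opposite inclusion $\sigma_\mathrm{ess}(H_{\Gamma,V})\subseteq[\epsilon_0,\infty)$ via Neumann bracketing. The common thread is that by assumption~\eqref{assb} the curvature $\gamma$ and, in the non-compact case, its derivatives vanish at infinity, so that far from the origin the operator expressed in the curvilinear coordinates \eqref{strip} differs from the straight-waveguide operator $H_{\Gamma_0,V}$, whose spectrum $[\epsilon_0,\infty)$ is known from \eqref{straightsp}, only by terms controlled by the smallness of $\gamma$. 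Assumption~\eqref{assc} guarantees that the geometric exterior of a large disc corresponds to large values of $|s|$, where $\gamma$ is indeed small.

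For the first inclusion I would fix $\lambda\ge\epsilon_0$, write $\lambda=\epsilon_0+k^2$ with $k\ge0$, and construct a singular sequence out of the transversally lowest generalized eigenfunction of the straight problem. Concretely, take $\psi_n(x(s,u))=\chi_n(s)\,\ee^{\mathrm{i}ks}\phi_0(u)$, where $\phi_0$ is the ground state of $h_V$ entering \eqref{infsph} and $\chi_n$ is a smooth longitudinal cut-off of slowly growing width centred at a point $s_n\to\infty$. Since the Jacobian factor $1-u\gamma(s)$ of the map \eqref{strip} tends to $1$ and the curvature-induced terms in the transformed Laplacian vanish as $s_n\to\infty$, a direct estimate gives $\|(H_{\Gamma,V}-\lambda)\psi_n\|\to0$, while $\|\psi_n\|$ stays bounded away from zero and $\psi_n\rightharpoonup0$; hence $\lambda\in\sigma_\mathrm{ess}(H_{\Gamma,V})$.

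For the opposite inclusion I would impose additional Neumann conditions on the boundary of a large disc $B_R$, which yields, in the sense of quadratic forms,
\begin{equation*}
H_{\Gamma,V}\ge H_N^{B_R}\oplus H_N^{\R^2\setminus B_R}.
\end{equation*}
The interior part $H_N^{B_R}$ has compact resolvent, hence empty essential spectrum, so by the min-max principle $\inf\sigma_\mathrm{ess}(H_{\Gamma,V})\ge\inf\sigma_\mathrm{ess}\big(H_N^{\R^2\setminus B_R}\big)\ge\inf\sigma\big(H_N^{\R^2\setminus B_R}\big)$. On the exterior the potential is supported in a thin strip following the ends of $\Gamma$; passing again to the coordinates \eqref{strip} and using $a\|\gamma\|_\infty<1$ from \eqref{assd} to keep the metric uniformly regular, one separates the transverse variable and bounds the exterior operator from below by $\epsilon_0$ diminished only by an error controlled by $\sup\{|\gamma(s)|:\,|s|>\rho\}$ with $\rho\to\infty$ as $R\to\infty$. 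Letting $R\to\infty$ then gives $\inf\sigma_\mathrm{ess}(H_{\Gamma,V})\ge\epsilon_0$, and combined with the first inclusion this proves the proposition.

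The step I expect to be the \emph{main obstacle} is the quantitative control of the curvature corrections in the transformed Laplacian on the exterior region: one must show that the longitudinal derivative terms and the cross terms generated by the non-constant Jacobian $1-u\gamma(s)$ can be absorbed into a genuine lower bound close to $\epsilon_0$, rather than merely converging in some weaker sense. Handling the region outside the strip and the matching along its boundary, together with confirming that the free part there contributes only the harmless half-line $[0,\infty)\subset[\epsilon_0,\infty)$, also requires care but is routine once the curvilinear estimates are in place.
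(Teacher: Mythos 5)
Your overall strategy coincides with the paper's: Neumann bracketing for the lower bound on the essential spectrum, and a Weyl singular sequence built from $\phi_0(u)\,\ee^{iks}$ carried to infinity along the asymptotically straight ends for the opposite inclusion. There is, however, one genuine gap in your bracketing half. When you ``separate the transverse variable'' on the exterior strip — and your appeal to \eqref{assd} suggests you work at the fixed halfwidth $a$ — the Neumann restriction of $h_V$ to the finite interval $(-a,a)$ has ground-state energy $\epsilon(a)$ that is \emph{strictly below} $\epsilon_0$: Neumann bracketing lowers thresholds, and for a fixed width this defect does not vanish. Your argument would therefore only yield $\inf\sigma_\mathrm{ess}(H_{\Gamma,V})\ge\epsilon(a)-o(1)<\epsilon_0$. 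The paper closes this by letting the transverse halfwidth $u_1$ of the bracketing halfstrips (or bent rectangles) tend to infinity — which assumption \eqref{assc} permits, since the widening neighbourhoods of the two ends stay disjoint, and the decay of $\gamma$ keeps the coordinates \eqref{strip} valid there — and by invoking a Dauge--Helffer type convergence $\epsilon(u_1)\to\epsilon_0$, exponentially fast because $V$ vanishes outside $(-a,a)$. So the error in your lower bound is controlled not only by $\sup\{|\gamma(s)|:\,|s|>\rho\}$ but also by the transverse width of the separated region, and both limits must be taken together.

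A second, more minor omission concerns the Weyl sequence: $\psi_n(x(s,u))=\chi_n(s)\,\ee^{iks}\phi_0(u)$ is not well defined as a function on $\R^2$, because $\phi_0$ lives on all of $\R$ while the map \eqref{strip} is injective only in a tubular neighbourhood of finite width (locally of order $1/\sup|\gamma|$). One needs an additional transverse cutoff $w(\nu u)$ as in \eqref{weylseq}, with $\nu\to0$ slowly enough that the support stays where the coordinates are valid; this produces extra terms of order $\OO(\nu)$ and $\OO(\nu^2)$ that must be estimated alongside the longitudinal ones, exactly as the paper does.
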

 % ------------- %
\begin{proof}
If $\Gamma$ is straight outside a compact we can divide the plane into four regions. The first two of them is a pair of \emph{disjoint} halfstrips $\Sigma_\pm=\{x(s,u):\: \pm s>s_1, |u|<u_1\}$, the rest consists of a compact set $\Sigma_\mathrm{c}$ containing $\mathrm{supp}\,V \setminus(\Sigma_+ \cup \Sigma_-)$ and its complement to $\R^2 \setminus(\Sigma_+ \cup \Sigma_-)$. We estimate $H_{\Gamma,V}$ from below by imposing additional Neumann condition at the boundaries between the four regions. The compact one does not contribute to the essential spectrum, hence we have to inspect the spectral thresholds of the other three. The operator part referring to the last named one is positive, while the operators corresponding to the two halfstrips have separated variables, and consequently, their essential spectra start at $\epsilon(u_1):=\inf\sigma(h^\mathrm{N}_V(u_1))$, where $h^\mathrm{N}_V(u_1)$ is the operator \eqref{transop} restricted to the interval $(-u_1,u_1)$ with Neumann boundary conditions. Furthermore, in analogy with \cite{DH93} one can check that $\epsilon(u_1)\to\epsilon_0$ as $u_1\to\infty$, in fact exponentially fast, because $V(x)=0$ outside $(-a,a)$ and $\phi_0$ is exponentially decreasing there. Noting finally that in view of assumption~\eqref{assc} the halfstrips width $2u_1$ can be chosen arbitrarily large by picking $s_1$ large enough, we can conclude that
 % ------------- %
 \begin{equation} \label{infspess}
\inf\sigma_\mathrm{ess}(H_{\Gamma,V}) = \epsilon_0.
 \end{equation}
 % ------------- %

If $\Gamma$ satisfies the other part of assumption \eqref{assb} we replace the halfstrips $\Sigma_\pm$ by families of pairwise disjoint regions $\Sigma^{(j)}_\pm := \{ x(s,u):\: \pm s\in(s_{2j-1},s_{2j}), u\in(-u_j,u_j)\},\, j\in\N$, determined by increasing sequences $\{s_j\}_{j=1}^\infty$ and $\{u_j\}_{j=1}^\infty$ of positive numbers, and impose again Neumann conditions at their boundaries. Using the parametrization \eqref{strip} we can pass from the corresponding Neumann restrictions of $h_V$ to unitarily equivalent operators $H^{(j)}_\pm$ on $L^2(\Sigma^{(j)}_{0,\pm})$, where $\Sigma^{(j)}_{0,+}:= (s_{2j-1},s_{2j})\times (-u_j,u_j)$ and $\Sigma^{(j)}_{0,-}$ is defined analogously, in the way described in \eqref{straightening} below. According to \cite[Sec.~1.1]{EK15} they are of the form
 % ------------- %
 \begin{subequations}
 \begin{align} %\label{boxop}
 H^{(j)}_\pm = & \;\, h^\mathrm{N}_V(u_1) \otimes (-\partial_s^2)_\mathrm{N} + V_\gamma(s,u), \nonumber \\[.5em]
 V_\gamma(s,u):= & -\frac{\gamma(s)^2}{4(1+u\gamma(s))^2} + \frac{u\ddot\gamma(s)}{2(1+u\gamma(s))^3} - \frac54\, \frac{u^2\dot\gamma(s)^2}{(1+u\gamma(s))^4}. \nonumber %\label{effpot}
 \end{align}
 \end{subequations}
 % ------------- %
Since $\gamma(s)\to 0$ as $|s|\to\infty$, in view of assumptions \eqref{assc} and \eqref{assd} the rectangles $\Sigma^{(j)}_{0,\pm}$ can be made arbitrarily wide in the $u$ variable by choosing large enough $s_{2j-1}$. The spectrum of each of the operators $H^{(j)}_\pm$ is discrete, of course, it is the accumulation point of their principal eigenvalues which determines the threshold of $\sigma_\mathrm{ess}(H_{\Gamma,V})$. Since the ground state of $(-\partial_s^2)_\mathrm{N}$ is zero and the eigenvalues of $H^{(j)}_\pm$ differ from those of $h^\mathrm{N}_V(u_1) \otimes (-\partial_s^2)_\mathrm{N}$ at most by $\|V_\gamma \upharpoonright \Sigma^{(j)}_{0,\pm}\|_\infty$ which tends to zero as $j\to\infty$, we arrive at \eqref{infspess} again.

To prove that there are no spectral gaps above $\epsilon_0$, we use Weyl criterion. To construct a suitable sequence, we consider functions $v,w\in C_0^\infty(\R)$ with the supports in $[-1,1]$ such that their norms are $\|v\|=\|w\|=1$ and $v(s)=w(s)=1$ holds in the vicinity of zero, and put
 % ------------- %
 \begin{equation} \label{weylseq}
\psi(s,u) = \sqrt{\mu\nu}\, v(\mu(s-s_0))\,w(\nu u)\,\phi_0(u)\,\ee^{iks}
 \end{equation}
 % ------------- %
for $\mu,\,\nu>0$, a fixed $k\in\R$, and some $s_0$. The conclusions we are going to make do not depend on $s_0$, so we can put $s_0=0$. Using the fact that $\phi_0$ is the eigenfunction of $h_V$ corresponding to the eigenvalue $\epsilon_0$ we find that
 % ------------- %
 \begin{align*} \label{weyleval}
 (-\Delta-V-\epsilon_0-k^2)\psi(s,u) & = \sqrt{\mu\nu} \big[ \big(-\mu^2 v''(\mu s)^2 - 2ik\mu v'(\mu s)\big) w(\nu u)\phi_0(u) \\[.3em]
 & + v(\mu s) \big( -\nu^2 w(\nu u)\phi_0(u) - \nu w'(\nu u)\phi'_0(u) \big) \big]\,\ee^{iks}
 \end{align*}
 % ------------- %
The right-hand side is a sum of four terms, $f_1+f_2+f_3+f_4$. Let us estimate their norms. After a simple change of variables we get
 % ------------- %
 $$ %\begin{equation} \label{f_1}
 \|f_1\|^2 = \int_{-1}^1 \int_{-1}^1 \mu^4 v''(\xi)^2 w(\eta)^2 \phi_0\big(\textstyle{\frac{u}{\nu}}\big)\,\D\xi\D\eta = \mu^4 \|w''\|^2 \phi_0(0) \big(1+\OO(\nu) \big)
 $$ %\end{equation}
 % ------------- %
so that $\|f_1\|=\OO(\mu^2)$, in a similar way we find $\|f_2\|=\OO(\mu)$, $\|f_3\|=\OO(\nu^2)$, and $\|f_4\|=\OO(\nu)$, and therefore $\| (-\Delta-V-\epsilon_0-k^2)\psi\| \to 0$ as $\mu,\,\nu\to 0$. To prove that any number larger than $\epsilon_0$ belongs to $\sigma_\mathrm{ess}(H_{\Gamma,V})$ it is thus sufficient to find a family of increasing regions threaded by the curve that support functions $\psi_j$ of the form \eqref{weylseq}. If $\Gamma$ is straight outside a compact we use the regions $\Sigma^{(j)}_{0,+}$ and put $s_0=\frac12(s_{2j-1}+s_{2j})$ in \eqref{weylseq}. In contrast to the first part of the proof, in addition to $u_j\to\infty$ it is also important to have $s_{2j}-s_{2j-1}\to\infty$ as $j\to\infty$, which is possible due to assumption \eqref{assc}. Then it is enough to choose $\mu_j=2c(s_{2j}-s_{2j-1})^{-1}$ and $\nu_j=cu_j^{-1}$ for some $c\in(0,1)$.

For $\Gamma$ satisfying the other part of assumption \eqref{assb} we replace the rectangles $\Sigma^{(j)}_{0,+}$ by the `bent rectangles' $\Sigma^{(j)}_+$, assumptions \eqref{assc} and \eqref{assd} again allow us to choose a disjoint family of them expanding in both the longitudinal and transversal directions. Using once more the straightening transformation we find that the norms of the corresponding Weyl approximants, $\| (-\Delta-\tilde{V}-\epsilon_0-k^2)\psi_j\|$, differ from the above estimate at most by $\|V_\gamma \upharpoonright \Sigma^{(j)}_{0,\pm}\|_\infty$, and since this quantity vanishes as we follow the curve to infinity, the proof is complete.
\end{proof}

%%%%%%%%%%%%%%%%%%%%%%%%%%%%%%%%%%%%%%%%%%%%
\section{Asymptotic results}\label{s:asympt}
\setcounter{equation}{0}

The other types of quantum waveguides, the hard-wall ones \cite{EK15} and the leaky wires, described by singular Schr\"odinger operators, are not only much better understood than operators of the type \eqref{Hamiltonian}, but they represent in a sense extreme cases of such systems. This makes it possible to prove some sufficient conditions for the existence of discrete spectrum. One comes from the approximation result proven in \cite{EI01}, and in greater generality in \cite{BEHL17}. To state it, we consider the family of potentials
 % ------------- %
 $$ %\begin{equation} \label{approxpot}
 V_\varepsilon:\: V_\varepsilon(u) = \textstyle{\frac{1}{\varepsilon}} V\big(\textstyle{\frac{u}{\varepsilon}}\big)
 $$ %\end{equation}
 % ------------- %
obtained by scaling of a given $V$ satisfying assumption \eqref{asse}.

 % ------------- %
\begin{proposition} \label{prop:scaling}
Consider a non-straight $C^2$-smooth curve $\Gamma:\:\R\to\R^2$ such that $|\Gamma(s)-\Gamma(s')|<c|s-s'|$ holds for some $c\in(0,1)$. If the support of its signed curvature $\gamma$ is noncompact, assume, in addition to \eqref{assb}, that $\gamma(s) = \OO(|s|^{-\beta})$ with some $\beta>\frac54$ as $|s|\to\infty$. Then $\sigma_\mathrm{disc}(H_{\Gamma,V_\varepsilon}) \ne \emptyset$ holds for all $\varepsilon$ small enough.
\end{proposition}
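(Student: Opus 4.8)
The plan is to obtain the claim as a limit statement, approximating the channel Hamiltonian by a leaky-wire operator. The scaling $V_\varepsilon(u)=\frac{1}{\varepsilon}V(u/\varepsilon)$ preserves the integral $\alpha:=\int_\R V(u)\,\D u>0$ while squeezing the potential profile onto $\Gamma$, so one expects $H_{\Gamma,V_\varepsilon}$ to converge, as $\varepsilon\to 0$, to the singular Schr\"odinger operator $-\Delta-\alpha\delta(x-\Gamma)$ describing a leaky wire of coupling strength $\alpha$ supported on $\Gamma$. This is exactly the content of the approximation results of \cite{EI01, BEHL17}, and it is here that the quantitative hypotheses on $\Gamma$ — the uniform arc-chord bound with $c\in(0,1)$ and the decay $\gamma(s)=\OO(|s|^{-\beta})$ with $\beta>\tfrac54$ in the noncompact case — are required, since they guarantee convergence in the \emph{norm-resolvent} sense. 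Granting this, it suffices to check that the limiting operator has a nonempty discrete spectrum and then to transfer this property back to the approximants.

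First I would record the spectral data of the limit. In the straight case the transverse problem $-\partial_x^2-\alpha\delta(x)$ has the single eigenvalue $-\tfrac14\alpha^2$, so $\sigma_\mathrm{ess}(-\Delta-\alpha\delta(x-\Gamma))=[-\tfrac14\alpha^2,\infty)$. The curvature-induced binding result for leaky wires established in \cite{EI01} then guarantees that a non-straight, asymptotically straight curve $\Gamma$ of the admitted class produces at least one eigenvalue $\lambda<-\tfrac14\alpha^2$ strictly below the continuum threshold.

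It remains to move this bound state to the regular operators. By Proposition~\ref{prop:essential} the approximants satisfy $\sigma_\mathrm{ess}(H_{\Gamma,V_\varepsilon})=[\epsilon_0(\varepsilon),\infty)$ with $\epsilon_0(\varepsilon):=\inf\sigma(h_{V_\varepsilon})$; since the one-dimensional comparison operator $h_{V_\varepsilon}=-\partial_x^2-V_\varepsilon$ itself converges in the norm-resolvent sense to $-\partial_x^2-\alpha\delta(x)$, we have $\epsilon_0(\varepsilon)\to-\tfrac14\alpha^2$. Fixing $\eta>0$ with $\lambda<-\tfrac14\alpha^2-2\eta$, the standard stability argument applies: integrating the resolvents along a small circle around $\lambda$ shows that the associated Riesz projections converge in norm, so for all $\varepsilon$ small enough $H_{\Gamma,V_\varepsilon}$ carries an eigenvalue within $\eta/2$ of $\lambda$. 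For such $\varepsilon$ one also has $\epsilon_0(\varepsilon)>-\tfrac14\alpha^2-\eta$, so this eigenvalue lies strictly below the essential spectrum and hence belongs to $\sigma_\mathrm{disc}(H_{\Gamma,V_\varepsilon})$, which is therefore nonempty.

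The hard part will be the first step — securing the norm-resolvent convergence rather than mere strong resolvent convergence, because only the former lets one conclude that the isolated eigenvalue of the limit is genuinely reproduced, with the correct multiplicity, by the approximants. This is precisely the juncture at which the hypotheses on $\Gamma$ are needed and where one must lean on \cite{BEHL17}. Once that convergence and the matching of the moving essential-spectrum thresholds $\epsilon_0(\varepsilon)\to-\tfrac14\alpha^2$ are in hand, the transfer of the bound state is routine.
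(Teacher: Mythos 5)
Your proposal is correct and follows essentially the same route as the paper: norm-resolvent convergence of $H_{\Gamma,V_\varepsilon}$ to the leaky-wire operator $-\Delta-\alpha\delta(x-\Gamma)$ via \cite{EI01,BEHL17}, the curvature-induced binding result of \cite{EI01} for the limit operator, and transfer of the isolated eigenvalue back to the approximants using the convergence of the essential-spectrum thresholds $\epsilon_0(\varepsilon)\to-\frac14\alpha^2$. The only detail the paper adds that you omit is the explicit check that assumption \eqref{assd} holds for small $\varepsilon$ because $\mathrm{supp}\,V_\varepsilon\subset[-\varepsilon a,\varepsilon a]$, a minor verification.
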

 % ------------- %
\begin{proof}
By the results of \cite{EI01, BEHL17} the operators $H_{\Gamma,V_\varepsilon}$ converge as $\varepsilon\to 0$ in the norm-resolvent sense to the singular Schr\"odinger operator $H_{\Gamma,\alpha}$ formally written as $-\Delta-\alpha\delta(x-\Gamma)$ with $\alpha:= \int_{-a}^a V(u)\,\D u$, which is the unique self-adjoint operator associated with the quadratic form
 % ------------- %
 $$ %\begin{equation} \label{leaky}
 q_{\Gamma,\alpha}[\psi] = \|\nabla\psi\|^2_{L^2(\R)} - \alpha \|\psi\|^2_{L^2(\Gamma)},
 $$ %\end{equation}
 % ------------- %
closed and below bounded on $H^1(\R^2)$. Consequently, the spectrum of $H_{\Gamma,V_\varepsilon}$ converges in the set sense to that of $H_{\Gamma,\alpha}$ as $\varepsilon\to 0$. In particular, it is not difficult to check directly that the essential spectrum threshold $\epsilon_0(\varepsilon)$ of $H_{\Gamma,V_\varepsilon}$ converges to the the essential spectrum threshold $-\frac14\alpha^2$ of $H_{\Gamma,\alpha}$.

The assumptions \eqref{assa}, \eqref{assb}, and \eqref{asse} are satisfied, the inequality $|\Gamma(s)-\Gamma(s')|<c|s-s'|$ implies further the validity of \eqref{assc}. Next we note that the support of $V_\varepsilon$ is contained in $[-\varepsilon a,\varepsilon a]$, hence \eqref{assd} is valid for sufficiently small values of $\varepsilon$. At the same time, the additional decay requirement imposed on $\gamma$ means by Remark~5.6 of \cite{EI01} that the assumptions of Theorem~5.2 of the said paper are satisfied, and consequently, the operator $H_{\Gamma,\alpha}$ has at least one isolated eigenvalue below the the essential spectrum threshold. From the convergence result indicated above, it then follows that the same is true for $H_{\Gamma,V_\varepsilon}$ with $\varepsilon$ small enough.
\end{proof}

The most common quantum waveguide model works with the particle confined to a tubular region with hard walls; if no other forces are involved, the Hamiltonian is (a multiple of) the appropriate Dirichlet Laplacian. In the two-dimensional situation where the region is a strip of a fixed width in the plane, the known results can again be used for comparison with our present problem. Let us recall the heuristic statement that the Dirichlet condition corresponds to `infinitely high potential wall', which can be given a mathematically rigorous meaning, cf. \cite[Sec.~4.2.3]{DK05} or \cite[Sec.~21]{Si05}.

 % ------------- %
\begin{proposition} \label{prop:hardwall}
Suppose that $\Gamma$ is not straight and assumptions \eqref{assa}--\eqref{assd} are satisfied, then the operator $H_{\Gamma,V_{J,0}}$ referring to the potential \eqref{flatbottom} has nonempty discrete spectrum for all $V_0$ large enough.
\end{proposition}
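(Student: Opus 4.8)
The plan is to make rigorous the cited heuristic that a deep well acts like a hard wall, thereby reducing the question to the well-understood spectral theory of bent Dirichlet strips. Write $\lambda_1 := \pi^2/(a_1+a_2)^2$ for the lowest Dirichlet eigenvalue on the transverse interval $J=[-a_1,a_2]$, and let $\Omega_J := \{x(s,u):\,u\in J\}$ denote the bent channel carved out by the support of $V_{J,0}$. The key observation is to study the \emph{shifted} operator $H_{\Gamma,V_{J,0}}+V_0$: on $\Omega_J$ it acts as $-\Delta$, while on the complement it equals $-\Delta+V_0$, so as $V_0\to\infty$ the barrier outside $\Omega_J$ becomes infinitely high. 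The square-well-to-Dirichlet limit recorded in \cite[Sec.~4.2.3]{DK05} and \cite[Sec.~21]{Si05} then yields norm-resolvent convergence of $H_{\Gamma,V_{J,0}}+V_0$ to the Dirichlet Laplacian $-\Delta_D^{\Omega_J}$ on the bent channel. The shift by $V_0$ is essential: without it both the candidate eigenvalue and the continuum threshold run off to $-\infty$.

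Next I would identify the essential spectrum on both sides. By Proposition~\ref{prop:essential} the threshold of $\sigma_\mathrm{ess}(H_{\Gamma,V_{J,0}})$ is the ground-state energy $\epsilon_0(V_0)$ of the transverse comparison operator $-\partial_u^2-V_0\chi_J$, so the shifted threshold is $\epsilon_0(V_0)+V_0$. The one-dimensional barrier problem $-\partial_u^2+V_0\chi_{J^c}$ converges, again by the deep-well limit, to the Dirichlet Laplacian on $J$, whence $\epsilon_0(V_0)+V_0\to\lambda_1$. This is exactly the essential-spectrum threshold of the asymptotically straight Dirichlet channel $-\Delta_D^{\Omega_J}$, so the continua of the approximants and of the limit operator match in the limit.

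The third ingredient is the classical hard-wall result. Since $\Gamma$ is not straight, is asymptotically straight in the sense of \eqref{assb}, and satisfies \eqref{assc}--\eqref{assd}, these are precisely the Duclos--Exner hypotheses, and the Dirichlet Laplacian $-\Delta_D^{\Omega_J}$ on the bent strip possesses at least one isolated eigenvalue $\Lambda_1<\lambda_1$ below its essential spectrum; this is the curvature-induced binding for hard-wall waveguides, see \cite{EK15}. Because norm-resolvent convergence carries an isolated eigenvalue of finite multiplicity of the limit to a nearby isolated eigenvalue of the approximants, $\Lambda_1$ is the limit of eigenvalues of $H_{\Gamma,V_{J,0}}+V_0$; combined with $\epsilon_0(V_0)+V_0\to\lambda_1>\Lambda_1$, this forces $H_{\Gamma,V_{J,0}}+V_0$ to have, for all $V_0$ large enough, an eigenvalue strictly below its essential threshold. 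Subtracting $V_0$ returns an eigenvalue of $H_{\Gamma,V_{J,0}}$ below $\epsilon_0(V_0)$, so $\sigma_\mathrm{disc}(H_{\Gamma,V_{J,0}})\neq\emptyset$.

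The main obstacle I anticipate is the convergence step. One must verify that the cited deep-well limits hold in the norm-resolvent topology rather than merely the strong one, since only the former guarantees that the isolated eigenvalue $\Lambda_1$ persists as an isolated eigenvalue separated from the likewise convergent essential threshold of the approximants, rather than dissolving into or being overtaken by the continuum. Reducing the bent channel to a straight one via the parametrization \eqref{strip}, so that the curvature-induced potential $V_\gamma$ of Proposition~\ref{prop:essential} enters as a bounded perturbation while the barrier acts only in the transverse variable, should bring the problem into the framework of \cite{DK05,Si05}; the only care needed is near $\partial\Omega_J$, where the square-well ground state leaks out over a transition layer of width $\OO(V_0^{-1/2})$. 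Should a fully self-contained argument be preferred, one can instead run the min-max principle directly, taking as trial function the product of the bent-strip Dirichlet ground state with the transverse profile $\phi_0$ of the deep well and estimating the Rayleigh quotient against $\epsilon_0(V_0)+V_0$, at the cost of a messier computation.
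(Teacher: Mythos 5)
Your proposal is correct and follows essentially the same route as the paper: shift by $V_0$, let the deep well converge to a Dirichlet wall on the bent channel $\Omega^J$, and invoke the classical curvature-induced bound state of the bent Dirichlet strip below $\pi^2|J|^{-2}$. The only difference is that the paper sidesteps the convergence issue you flag as the main obstacle by noting that $G_{\Gamma,V_{J,0}}=H_{\Gamma,V_{J,0}}+V_0$ is a monotone increasing holomorphic family, so its minimax values and essential-spectrum threshold converge monotonically to the corresponding Dirichlet quantities, which already separates the eigenvalue from the continuum for large $V_0$ without needing norm-resolvent convergence.
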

 % ------------- %
\begin{proof}
$\{H_{\Gamma,V_{J,0}}:\,V_0\ge 0\}$ is clearly a holomorphic family of type (A) in the sense of Kato, and moreover, it is monotonous with respect to $V_0$. The same is true for operators $G_{\Gamma,V_{J,0}}:= H_{\Gamma,V_{J,0}}+V_0$ which are positive and form an increasing family, $G_{\Gamma,V_{J,0}} \ge G_{\Gamma,V'_{J,0}}$ for $V_0>V'_0$. Thus their eigenvalues $\lambda_j(V_0)$ are continuous functions of $V_0$, increasing by the minimax principle, and as such that they have limits as $V_0\to\infty$; the same also applies to the threshold of their essential spectrum. The limiting values refer to the corresponding spectral quantities of the (negative) Dirichlet Laplacian of $\Omega^J$, the support of the function \eqref{flatbottom}. Under the hypotheses made, the assumptions of Theorem~1.1 and its corollary in \cite{EK15} are fulfilled, hence the said limiting operator has a nonempty discrete spectrum below the continuum which starts at $\pi^2|J|^{-2},\, |J|=a_1+a_2$. It follows that for all sufficiently large $V_0$ we have $\sigma_\mathrm{disc}\big(G_{\Gamma,V_{J,0}}\big) \ne\emptyset$, and the same is true, of course, for the shifted operators $H_{\Gamma,V_{J,0}}$.
\end{proof}

 % ------------- %
\begin{remark} %\label{weakerass}
{\rm We note that the assumptions under which the used spectral properties of the limiting operators have been derived are in no way optimal. This concerns, for instance, the existence of a wedge separating the curve ends in Proposition~\ref{prop:scaling} or the flat bottom of the potential channel in Proposition~\ref{prop:hardwall}. There is, no doubt, a room for mathematical activity here.}
\end{remark}
 % ------------- %

%%%%%%%%%%%%%%%%%%%%%%%%%%%%%%%%%%%%%%%%%%%%%%%
\section{Birman-Schwinger analysis}\label{s:BS}
\setcounter{equation}{0}

Given a function $V$ and $z\in\C\setminus\R_+$ we consider the operator
 % ------------- %
 \begin{equation} \label{BSop}
K_{\Gamma,V}(z) := \tilde{V}^{1/2} (-\Delta-z)^{-1} \tilde{V}^{1/2}
 \end{equation}
 % ------------- %
with $\tilde{V}$ given by \eqref{potential}; we are particularly interested in the negative values of the spectral parameter, $z=-\kappa^2$ with $\kappa>0$. In view of assumption \eqref{asse} it is a bounded operator, positive for $z=-\kappa^2$, which we can regard as a map $L^2(\Omega^a) \to L^2(\Omega^a)$. By Birman-Schwinger principle this operator can be used to determine the discrete spectrum of $H_{\Gamma,V}$.
 % ------------- %
\begin{proposition} \label{prop:BS}
$z\in\sigma_\mathrm{disc}(H_{\Gamma,V})$ holds if and only if $\,1\in\sigma_\mathrm{disc}(K_{\Gamma,V}(z))$. The function $\kappa \mapsto K_{\Gamma,V}(-\kappa^2)$ is continuous and decreasing in $(0,\infty)$, tending to zero in the norm topology, that is, $\|K_{\Gamma,V}(-\kappa^2)\| \to 0$ holds as $\kappa\to\infty$.
\end{proposition}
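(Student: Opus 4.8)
The plan is to prove the Birman--Schwinger equivalence first, which is the substantial part, and then to read off the monotonicity, continuity, and norm decay from elementary resolvent estimates. Throughout, note that for $z<\epsilon_0<0$ we have $z\notin\sigma(-\Delta)=[0,\infty)$, so $(-\Delta-z)^{-1}$ is a bounded operator. If $H_{\Gamma,V}\psi=z\psi$ with $\psi\neq0$, then $(-\Delta-z)\psi=\tilde{V}\psi$, hence $\psi=(-\Delta-z)^{-1}\tilde{V}^{1/2}\varphi$ with $\varphi:=\tilde{V}^{1/2}\psi$, and applying $\tilde{V}^{1/2}$ yields $K_{\Gamma,V}(z)\varphi=\varphi$. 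The assignment $\psi\mapsto\tilde{V}^{1/2}\psi$ is injective on $\ker(H_{\Gamma,V}-z)$, because $\tilde{V}^{1/2}\psi=0$ gives $\tilde{V}\psi=0$ and then $(-\Delta-z)\psi=0$, whence $\psi=0$. Conversely, for $\varphi$ with $K_{\Gamma,V}(z)\varphi=\varphi$ I would put $\psi:=(-\Delta-z)^{-1}\tilde{V}^{1/2}\varphi$ and verify directly that $\tilde{V}^{1/2}\psi=K_{\Gamma,V}(z)\varphi=\varphi$ and $(H_{\Gamma,V}-z)\psi=\tilde{V}^{1/2}\big(\varphi-K_{\Gamma,V}(z)\varphi\big)=0$; the two maps are thus mutually inverse, so $\ker(H_{\Gamma,V}-z)$ and $\ker(K_{\Gamma,V}(z)-1)$ have the same finite dimension.

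What remains, and is the only genuine obstacle, is to upgrade this identity of eigenspaces to a statement about \emph{discrete} spectra, namely that the eigenvalue $1$ of $K_{\Gamma,V}(z)$ is isolated for $z<\epsilon_0$. Since $\mathrm{supp}\,\tilde{V}$ is an infinite strip, $K_{\Gamma,V}(z)$ is not compact and has nonempty essential spectrum, so the required isolation has to be argued. The claim is that $\sup\sigma_\mathrm{ess}(K_{\Gamma,V}(-\kappa^2))<1$ once $\kappa^2>-\epsilon_0$. I would prove this by transporting singular sequences: if $\{\varphi_n\}$ were a Weyl sequence for $K_{\Gamma,V}(z)$ at the value $1$, then $\psi_n:=(-\Delta-z)^{-1}\tilde{V}^{1/2}\varphi_n$ would satisfy $(H_{\Gamma,V}-z)\psi_n=-\tilde{V}^{1/2}(K_{\Gamma,V}(z)-1)\varphi_n\to0$, while $\tilde{V}^{1/2}\psi_n=K_{\Gamma,V}(z)\varphi_n\to\varphi_n$ keeps $\|\psi_n\|$ bounded away from zero and $\psi_n\rightharpoonup0$; after normalization this is a singular Weyl sequence for $H_{\Gamma,V}$ at energy $z$, contradicting $z<\epsilon_0=\inf\sigma_\mathrm{ess}(H_{\Gamma,V})$ from Proposition~\ref{prop:essential}. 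Hence $1\notin\sigma_\mathrm{ess}(K_{\Gamma,V}(z))$ and the equivalence of discrete spectra follows.

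The second assertion is then routine operator calculus. For $z=-\kappa^2$ the resolvent $(-\Delta+\kappa^2)^{-1}$ is positive with $\|(-\Delta+\kappa^2)^{-1}\|=\kappa^{-2}$; by the spectral theorem $(-\Delta+\kappa_1^2)^{-1}\ge(-\Delta+\kappa_2^2)^{-1}$ for $\kappa_1<\kappa_2$, since $(\lambda+\kappa_1^2)^{-1}\ge(\lambda+\kappa_2^2)^{-1}$ for every $\lambda\ge0$, and conjugating by the bounded self-adjoint $\tilde{V}^{1/2}$ preserves the order, which is the claimed monotonicity $K_{\Gamma,V}(-\kappa_1^2)\ge K_{\Gamma,V}(-\kappa_2^2)$. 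Norm continuity follows from the resolvent identity, which yields
\[
K_{\Gamma,V}(-\kappa_1^2)-K_{\Gamma,V}(-\kappa_2^2)=(\kappa_2^2-\kappa_1^2)\,\tilde{V}^{1/2}(-\Delta+\kappa_1^2)^{-1}(-\Delta+\kappa_2^2)^{-1}\tilde{V}^{1/2},
\]
a quantity of norm at most $|\kappa_2^2-\kappa_1^2|\,\|V\|_\infty\,\kappa_1^{-2}\kappa_2^{-2}$. Finally $\|K_{\Gamma,V}(-\kappa^2)\|\le\|\tilde{V}^{1/2}\|^2\,\|(-\Delta+\kappa^2)^{-1}\|=\|V\|_\infty\,\kappa^{-2}\to0$ as $\kappa\to\infty$, giving the asserted decay.
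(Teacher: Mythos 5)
Your proof is correct, and it is in fact more self-contained than the paper's: the paper disposes of the first claim in one line by citing \cite{BGRS97} and only writes out the second. What you supply from scratch is precisely the content of that citation --- the mutually inverse maps $\psi\mapsto\tilde V^{1/2}\psi$ and $\varphi\mapsto(-\Delta-z)^{-1}\tilde V^{1/2}\varphi$ between $\ker(H_{\Gamma,V}-z)$ and $\ker(K_{\Gamma,V}(z)-1)$, together with the genuinely nontrivial point that, since $\tilde V$ is supported on an infinite strip and $K_{\Gamma,V}(z)$ is not compact, one must still show $1\notin\sigma_\mathrm{ess}(K_{\Gamma,V}(z))$ before speaking of the \emph{discrete} spectrum; your transport of singular sequences back to $H_{\Gamma,V}$, combined with Proposition~\ref{prop:essential}, settles this for $z<\epsilon_0$, which is exactly the regime in which the equivalence is used later. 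For the second claim the two arguments are the same functional-calculus computation in different clothes: the paper differentiates the quadratic form $(\psi,K_{\Gamma,V}(-\kappa^2)\psi)$ in $\kappa$, obtaining strict decrease on vectors with $\tilde V^{1/2}\psi\neq0$, whereas you derive (non-strict) operator monotonicity from the pointwise monotonicity of $\lambda\mapsto(\lambda+\kappa^2)^{-1}$ plus congruence by $\tilde V^{1/2}$, and norm continuity from the resolvent identity; both end with the identical bound $\|K_{\Gamma,V}(-\kappa^2)\|\le\kappa^{-2}\|V\|_\infty$, and the non-strict version suffices for everything done with the proposition afterwards. The one caveat, shared by both proofs, is that the biconditional as literally stated also covers $z\in(\epsilon_0,0)$, where one would in addition have to rule out $1$ being an \emph{isolated} eigenvalue of $K_{\Gamma,V}(z)$ while $z$ lies in $\sigma_\mathrm{ess}(H_{\Gamma,V})$; neither your argument nor the paper addresses that regime, and it plays no role in the sequel.
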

 % ------------- %
\begin{proof}
The first claim is a particular case of a more general and commonly known result, see, e.g., \cite{BGRS97}. The continuity follows from the functional calculus and  we have
 % ------------- %
 $$ %\begin{equation} \label{monotonicity}
\frac{\D}{\D\kappa} (\psi,\tilde{V}^{1/2} (-\Delta+\kappa^2)^{-1}\, \tilde{V}^{1/2}\psi) = -2\kappa (\psi,\tilde{V}^{1/2} (-\Delta+\kappa^2)^{-2}\, \tilde{V}^{1/2}\psi) < 0
 $$ %\end{equation}
 % ------------- %
for any $\psi\in L^2(\Omega^a)$ with $\tilde{V}^{1/2}\psi \ne 0$ which proves the monotonicity. Finally, the simple estimate $\|K_{\Gamma,V}(-\kappa^2)\| \le \kappa^{-2} \|V\|_\infty$ concludes the proof.
\end{proof}
 % ------------- %

Note also that if $g$ is an eigenfunction of the operator \eqref{BSop} with eigenvalue one, the corresponding eigenfunction of $H_{\Gamma,V}$ is given by
 % ------------- %
 \begin{equation} \label{reconst}
 \phi(x) = \int_{\mathrm{supp}\,\tilde{V}} G_\kappa(x,x')\, \tilde{V}(x')^{1/2}g(x')\, \D x',
 \end{equation}
 % ------------- %
where $G_\kappa$ is the integral kernel of $(-\Delta+\kappa^2)^{-1}$.

Using the knowledge of the Laplacian resolvent we can write the action of $K_{\Gamma,V}(z)$ explicitly, in particular, for $z=-\kappa^2$ with $\kappa>0$ it is an integral operator with the kernel
 % ------------- %
 $$ %\begin{equation} \label{kernel}
K_{\Gamma,V}(x,x';-\kappa^2) = \frac{1}{2\pi}\,\tilde{V}^{1/2}(x) K_0(\kappa|x-x'|) \tilde{V}^{1/2}(x'),
 $$ %\end{equation}
 % ------------- %
where $K_0$ is the Macdonald function, mapping $L^2(\Omega^a)$ to itself.

In order to treat the geometry of $\Gamma$ as a perturbation it is useful to pass to the `straightened' strip in the way used in treatment of the `hard-wall' waveguides \cite[Sec.~1.1]{EK15}. The first step is the natural coordinate change, passing from the Cartesian coordinates in the plane to $s,u$, which amounts to a unitary map $L^2(\Omega^a) \to L^2(\Omega^a_0, (1+u\gamma(s))^{1/2}\D s\D u)$. To get rid of the Jacobian, we use the unitary operator
 % ------------- %
 \begin{equation} \label{straightening}
L^2(\Omega^a) \to L^2(\Omega^a_0), \quad (U\psi)(s,u) = (1+u\gamma(s))^{1/2} \psi(x(s,u)).
 \end{equation}
 % ------------- %
Using it we pass from the Birman-Schwinger operator $K_{\Gamma,V}(-\kappa^2)$ to the unitarily equi\-valent one, $\RR^\kappa_{\Gamma,V} := U K_{\Gamma,V}(-\kappa^2) U^{-1}$, which is an integral operator on $L^2(\Omega^a_0)$ with the kernel
 % ------------- %
 \begin{equation} \label{uniteq}
\RR^\kappa_{\Gamma,V}(s,u;s',u') = \frac{1}{2\pi}\,W(s,u)^{1/2} K_0(\kappa|x-x'|)\, W(s',u')^{1/2},
 \end{equation}
 % ------------- %
where $x=x(s,u)$, $x'=x(s',u')$, and the modified potential is
 % ------------- %
 $$ %\begin{equation} \label{modpot}
W(s,u) := (1+u\gamma(s))\,V(u).
 $$ %\end{equation}
 % ------------- %

%%%%%%%%%%%%%%%%%%%%%%%%%%%%%%%%%%%%%
\section{The straight case} %\label{s:straight}
\setcounter{equation}{0}

The Birman-Schwinger operator corresponding to the straight potential ditch has the kernel
 % ------------- %
 \begin{equation} \label{straightker}
\RR^\kappa_{\Gamma_0,V}(s,u;s',u') = \frac{1}{2\pi}\,V(u)^{1/2} K_0(\kappa|x_0-x'_0|)\, V(u')^{1/2},
 \end{equation}
 % ------------- %
where $|x_0-x'_0| = \big[ (s-s')^2 + (u-u')^2\big]^{1/2}$. To find its spectrum we notice that, with respect to the longitudinal variable $s$, it acts as a convolution. The unitarily equivalent operator obtained by means  of the Fourier-Plancherel operator $F$ on $L^2(\R)$, has thus the form of a direct integral,
 % ------------- %
 \begin{equation} \label{dirint}
 (F\otimes I) \RR^\kappa_{\Gamma_0,V} (F\otimes I)^{-1} = \int^\oplus_\R \RR^\kappa_{\Gamma_0,V}(p)\, \D p,
 \end{equation}
 % ------------- %
where the fibers are integral operator on $L^2(-a,a)$  with the kernels
 % ------------- %
\begin{equation} \label{1dBSkernel}
 \RR^\kappa_{\Gamma_0,V}(u,u';p) = V(u)^{1/2}\, \frac{\ee^{-\sqrt{\kappa^2+p^2}|u-u'|}}{2\sqrt{\kappa^2+p^2}}\, V(u')^{1/2},
\end{equation}
 % ------------- %
because by \cite[6.726.4]{GR07} and \cite[10.2.17]{AS72} we have
 % ------------- %
 $$ %\begin{equation} \label{eval}
 \frac{1}{2\pi}\, \int_\R K_0\big(\kappa\sqrt{\xi^2+|u-u'|^2}\big)\, \ee^{ip\xi}\, \D\xi = \frac{\ee^{-\sqrt{\kappa^2+p^2}|u-u'|}}{2\sqrt{\kappa^2+p^2}}.
 $$ %\end{equation}
 % ------------- %
However, in view of \eqref{1dBSkernel} $\RR^\kappa_{\Gamma_0,V}(p)$ is nothing but the Birman-Schwinger operator associated with \eqref{transop} referring to the spectral parameter $z=-(\kappa^2+p^2)$. By assumption, $\epsilon_0$ is the smallest eigenvalue of $h_V$, and consequently, by Proposition~\ref{prop:BS} in combination with \eqref{dirint}, the number $-\kappa^2=\epsilon_0+p^2$ belongs to the spectrum of $H_{\Gamma_0,V}$ for any $p\in\R$, in accordance with \eqref{straightsp}.

At the same time, the operator with kernel \eqref{straightker} satisfies
 % ------------- %
 $$ %\begin{equation} \label{eval}
 \sup \sigma(\RR^{\kappa_0}_{\Gamma_0,V}) = 1,
 $$ %\end{equation}
 % ------------- %
where $\kappa_0= \sqrt{-\epsilon_0}$, because, was the left-hand side larger than one, by Proposition~\ref{prop:BS} there would exist a $\tilde\kappa > \kappa_0$ such that $1\in \sigma(\RR^{\tilde\kappa}_{\Gamma_0,V})$, and consequently, we would have $-\tilde\kappa^2 \in \sigma(H_{\Gamma_0,V})$, however, this contradicts relation \eqref{straightsp}.

We also note that one can relate the eigenfunction $\phi_0$ of $h_V$ to the eigenfunction $g_0$ of $\RR^\kappa_{\Gamma_0,V}(0)$ corresponding to the unit eigenvalue. On the one hand, we have
 % ------------- %
 \begin{equation} \label{g_0}
 g_0(u) = V^{1/2}(u)\phi_0(u),
 \end{equation}
 % ------------- %
on the other hand, $\phi_0$ can be expressed in the way analogous to \eqref{reconst} and, \emph{mutatis mutandis}, one can write the generalized eigenfunction associated with the bottom of $\sigma(H_{\Gamma_0,V})$ as
 % ------------- %
 $$ %\begin{equation} \label{reconst}
 f_0(s,u) = \phi_0(u) = \int_{-a}^a \frac{\ee^{-\kappa_0|u-u'|}}{2\kappa_0}\,V(u')^{1/2}\,g_0(u')\, \D u'.
 $$ %\end{equation}
 % ------------- %

%%%%%%%%%%%%%%%%%%%%%%%%%%%%%%%%%%%%%%%%%%%%%%%%%%%%%%%%
\section{Existence of bound states} %\label{s:boundstate}
\setcounter{equation}{0}

Our aim is now to use the Birman-Schwinger method to derive a condition which would ensure the existence of curvature-induced bound states, that is, a discrete spectrum of operator $H_{\Gamma,V}$.

 % ------------- %
\begin{theorem} \label{thm:boundstate}
Let assumptions \eqref{assa}--\eqref{asse} be valid and set
 % ------------- %
 \begin{align*}
\CC^\kappa_{\Gamma,V}(s,u;s',u') = & \frac{1}{2\pi}\,\phi_0(u)V(u)\,\big[((1+u\gamma(s))^{1/2}\,K_0(\kappa|x(s,u)-x(s',u')|)\,(1+u'\gamma(s'))^{1/2} \nonumber \\[.3em] & -  K_0(\kappa|x_0(s,u)- x_0(s',u')|)\,\big]\, V(u')\phi_0(u')
 \end{align*}
 % ------------- %
for all $(s,u),\,(s',u') \in \Omega^a_0$, then we have $\sigma_\mathrm{disc}(H_{\Gamma,V}) \ne \emptyset$ provided
 % ------------- %
 \begin{equation} \label{BSsufficient}
 \int_{\R^2} \D s\D s' \int_{-a}^a \int_{-a}^a \D u\D u'\,\CC^{\kappa_0}_{\Gamma,V}(s,u;s',u') > 0.
 \end{equation}
 % ------------- %
holds for $\kappa_0= \sqrt{-\epsilon_0}$.
\end{theorem}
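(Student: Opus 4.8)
The plan is to use the Birman--Schwinger correspondence of Proposition~\ref{prop:BS} in its unitarily equivalent form: it suffices to produce a $\tilde\kappa>\kappa_0$ for which $1\in\sigma_\mathrm{disc}(\RR^{\tilde\kappa}_{\Gamma,V})$, since then $-\tilde\kappa^2<\epsilon_0$ lies below $\sigma_\mathrm{ess}(H_{\Gamma,V})=[\epsilon_0,\infty)$ by Proposition~\ref{prop:essential} and is therefore a discrete eigenvalue. I would track the top of the spectrum, $\mu(\kappa):=\sup\sigma(\RR^\kappa_{\Gamma,V})$, which by Proposition~\ref{prop:BS} is continuous and strictly decreasing with $\mu(\kappa)\to 0$ as $\kappa\to\infty$. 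The whole argument then reduces to the single inequality $\mu(\kappa_0)>1$: once it holds, one locates the corresponding eigenvalue at $\kappa_0$ above the essential spectrum and follows it down to the value $1$.

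First I would settle the essential spectrum in the straightened picture. The difference $D^\kappa:=\RR^\kappa_{\Gamma,V}-\RR^\kappa_{\Gamma_0,V}$ is precisely the integral operator whose kernel is the bracket entering the definition of $\CC^\kappa_{\Gamma,V}$, dressed by $\frac{1}{2\pi}V(u)^{1/2}(\cdots)V(u')^{1/2}$. By assumption~\eqref{assb} we have $\gamma,\dot\gamma,\ddot\gamma\to 0$, so as one follows the curve to infinity the Jacobian factors $(1+u\gamma(s))^{1/2}\to 1$ and $|x(s,u)-x(s',u')|\to|x_0(s,u)-x_0(s',u')|$, whence the bracket decays; combined with the exponential decay of $K_0$ in $|s-s'|$ and the cancellation of the logarithmic diagonal singularity of the two Macdonald terms, this makes $D^\kappa$ Hilbert--Schmidt, hence compact. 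Weyl's theorem then gives $\sigma_\mathrm{ess}(\RR^\kappa_{\Gamma,V})=\sigma_\mathrm{ess}(\RR^\kappa_{\Gamma_0,V})$, and the latter is purely essential with supremum $1$ at $\kappa=\kappa_0$ by the direct-integral analysis of the straight case; for $\kappa>\kappa_0$ strict monotonicity pushes this supremum below $1$.

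The heart of the proof, and the step I expect to be the main obstacle, is the variational estimate $\mu(\kappa_0)>1$. The trial family I would use is $g_\sigma(s,u)=\chi(\sigma s)\,g_0(u)$ with $g_0=V^{1/2}\phi_0$ from \eqref{g_0}, where $\chi\in C_0^\infty(\R)$, $\chi(0)=1$, and $\sigma\to 0^+$ spreads the profile along the channel, so that $\|g_\sigma\|^2=\sigma^{-1}\|\chi\|^2\|g_0\|^2$. Writing $\delta(\sigma):=1-(g_\sigma,\RR^{\kappa_0}_{\Gamma_0,V}g_\sigma)/\|g_\sigma\|^2\ge 0$, I would split
$$\frac{(g_\sigma,\RR^{\kappa_0}_{\Gamma,V}g_\sigma)}{\|g_\sigma\|^2}-1=-\delta(\sigma)+\frac{(g_\sigma,D^{\kappa_0}g_\sigma)}{\|g_\sigma\|^2}$$
and estimate the two pieces separately. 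For the straight part, passing to the Fourier representation \eqref{dirint} shows that $g_0$ is the fibre eigenvector with eigenvalue one at $p=0$ and that the fibre expectation $\lambda(p)=(g_0,\RR^{\kappa_0}_{\Gamma_0,V}(p)g_0)$ is even and $C^2$ in $p$ with a maximum there; since the Fourier support of $\chi(\sigma\,\cdot)$ has width of order $\sigma$, this yields $\delta(\sigma)=\OO(\sigma^2)$. For the curvature part, the computation leading to the definition of $\CC^\kappa_{\Gamma,V}$ gives
$$(g_\sigma,D^{\kappa_0}g_\sigma)=\int_{\R^2}\D s\,\D s'\int_{-a}^a\int_{-a}^a\D u\,\D u'\;\chi(\sigma s)\chi(\sigma s')\,\CC^{\kappa_0}_{\Gamma,V}(s,u;s',u'),$$
which tends to the positive number in \eqref{BSsufficient} as $\sigma\to 0$. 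Hence the curvature term is of order $\sigma$ and positive, whereas the localisation cost $\delta(\sigma)$ is only of order $\sigma^2$; for $\sigma$ small enough the former dominates and the quotient exceeds $1$, so $\mu(\kappa_0)>1$. The delicate point is exactly this matching of orders, together with the integrability of $\CC^{\kappa_0}_{\Gamma,V}$ required for the limit, both of which ultimately rest on the decay afforded by \eqref{assb}.

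Finally I would assemble the pieces. Since $\sup\sigma_\mathrm{ess}(\RR^{\kappa_0}_{\Gamma,V})=1<\mu(\kappa_0)$, the value $\mu(\kappa_0)$ is an isolated eigenvalue of finite multiplicity; by the continuity and strict monotonicity of $\mu$, together with $\sup\sigma_\mathrm{ess}(\RR^\kappa_{\Gamma,V})<1$ for every $\kappa>\kappa_0$, the eigenvalue branch stays isolated and decreases continuously from a value larger than $1$ towards $0$, so there is a (unique) $\tilde\kappa>\kappa_0$ with $\mu(\tilde\kappa)=1$. At that point $1\in\sigma_\mathrm{disc}(\RR^{\tilde\kappa}_{\Gamma,V})$, and Proposition~\ref{prop:BS} yields $-\tilde\kappa^2\in\sigma_\mathrm{disc}(H_{\Gamma,V})$, proving $\sigma_\mathrm{disc}(H_{\Gamma,V})\neq\emptyset$.
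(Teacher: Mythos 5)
Your argument is correct and follows essentially the same route as the paper: the same trial function $\chi(\sigma s)\,g_0(u)$, the same splitting $\RR^{\kappa_0}_{\Gamma,V}=\RR^{\kappa_0}_{\Gamma_0,V}+D^{\kappa_0}$, and the same matching of orders --- an absolute localisation cost of order $\OO(\sigma)$ against a perturbation term converging to the positive integral \eqref{BSsufficient}. The only divergence is that you spell out, via the (asserted but unproved) Hilbert--Schmidt property of $D^\kappa$ and an eigenvalue branch, the final Birman--Schwinger step that the paper delegates to Proposition~\ref{prop:BS}; that elaboration is plausible but dispensable, since $\sup\sigma(\RR^{\kappa_0}_{\Gamma,V})>1$ already yields $\inf\sigma(H_{\Gamma,V})<\epsilon_0=\inf\sigma_\mathrm{ess}(H_{\Gamma,V})$ and hence a nonempty discrete spectrum.
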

 % ------------- %
\begin{proof}
The main idea is to treat the geometry of the system, translated by \eqref{straightening} into the coefficients of the operator, as a perturbation of the straight case. By Proposition~\ref{prop:essential} the essential spectrum is preserved, hence in order to demonstrate that the spectrum of $H_{\Gamma,V}$ has at least one eigenvalue below $\epsilon_0$, it is in view of Proposition~\ref{prop:BS} sufficient to find a function $\psi_\eta\in L^2(\Omega^a_0)$ such that
 % ------------- %
 \begin{equation} \label{variation}
 (\psi, \RR^{\kappa_0}_{\Gamma,V}\psi) - \|\psi\|^2 > 0
 \end{equation}
 % ------------- %
As usual a natural way to construct such a trial function is to combine the generalized eigenfunction, associated with the edge of the spectrum, with a mollifier which makes it an element of the Hilbert space. Let us first inspect the effect of the mollifier in the straight case, $\Gamma=\Gamma_0$.
 % ------------- %
\begin{lemma} \label{l:straight}
Let $\psi_\eta\in L^2(\Omega^a_0)$ be of the form $\psi_\eta(s,u) = h_\eta(s) g_0(u)$, where $h_\eta(s)=h(\eta s)$ with a function $h\in C_0^\infty(\R)$ such that $h(s)=1$ in the vicinity of $s=0$. Then
 % ------------- %
 $$ %\begin{equation} \label{straightpart}
 (\psi_\eta,  \RR^{\kappa_0}_{\Gamma_0,V}\psi_\eta) - \|\psi_\eta\|^2 = \OO(\eta)
 $$ %\end{equation}
 % ------------- %
holds as $\eta\to 0$.
\end{lemma}
 % ------------- %
\begin{proof}
Since $g_0$ is by assumption the eigenfuction of $\RR^{\kappa_0}_{\Gamma_0,V}(0)$, we can rewrite the second term on the left-hand side as $-\|h_\eta\|^2 (g_0, \RR^{\kappa_0}_{\Gamma_0,V}(0) g_0)$. Using relations \eqref{straightker}--\eqref{1dBSkernel} we cast the expression to be estimated into the form
 % ------------- %
 \begin{align*}
 \int_{-a}^a \int_{-a}^a & g_0(u)V(u)^{1/2} \bigg[\int_\R |\hat h_\eta(p)|^2\, \frac{\ee^{-\sqrt{\kappa_0^2+p^2}|u-u'|}}{2\sqrt{\kappa_0^2+p^2}}\,\D p
 - \|h_\eta\|^2 \frac{\ee^{-\kappa_0|u-u'|}}{2\kappa_0} \bigg] \\[.3em] & \times V(u')^{1/2} g_0(u')\,\D u\D u',
 \end{align*}
 % ------------- %
and since the functions $g_0$ and $V$ are bounded, it is enough to check that
 % ------------- %
 $$ %\begin{equation} \label{eta->0}
 \int_\R |\hat h_\eta(p)|^2\, \frac{\ee^{-\sqrt{\kappa_0^2+p^2}|u-u'|}}{2\sqrt{\kappa_0^2+p^2}}\,\D p
 - \|h_\eta\|^2 \frac{\ee^{-\kappa_0|u-u'|}}{2\kappa_0} = \OO(\eta)
 $$ %\end{equation}
 % ------------- %
holds as $\eta\to 0$. We have $\hat h_\eta(p) = \frac1\eta\, \hat h\big( \frac{p}{\eta}\big)$, hence, changing the integration variable and using the mean value theorem, we can rewrite the first term on the left-hand side of this expression as
 % ------------- %
 $$ %\begin{equation} \label{eta->0}
 \frac1\eta\, \int_\R |\hat h(\zeta)|^2\, \frac{\ee^{-\sqrt{\kappa_0^2+\eta^2\zeta^2}|u-u'|}}{2\sqrt{\kappa_0^2+\eta^2\zeta^2}}\,\D\zeta
 = \frac1\eta\,  \Big( \frac{\ee^{-\kappa_0|u-u'|}}{2\kappa_0} + \OO(\eta^2) \Big)\|h\|^2,
 $$ %\end{equation}
 % ------------- %
and using further the relation $\|h_\eta\|^2 = \frac1\eta\,\|h\|^2$ we arrive at the result.
\end{proof}
 % ------------- %

Now we can return to the proof of Theorem~\ref{thm:boundstate}. Consider the difference of the Birman-Schwinger operators
 % ------------- %
 \begin{equation} \label{difference}
 \DD^\kappa_{\Gamma,V} := \RR^\kappa_{\Gamma,V} - \RR^\kappa_{\Gamma_0,V}
 \end{equation}
 % ------------- %
which is by \eqref{uniteq} and \eqref{straightker} an integral operator on $L^2(\Omega^a_0)$ with the kernel
 % ------------- %
 \begin{align}
\DD^\kappa_{\Gamma,V}(s,u;s',u') = & \frac{1}{2\pi}\,\Big(W(s,u)^{1/2} K_0(\kappa|x(s,u)-x(s',u')|)\, W(s',u')^{1/2} \nonumber \\[.3em] & - V(u)^{1/2} K_0(\kappa|x_0(s,u)- x_0(s',u')|)\, V(u')^{1/2} \Big) \nonumber %\label{diffkernel}
 \end{align}
 % ------------- %
In view of \eqref{variation}, \eqref{difference}, and Lemmma~\ref{l:straight} we would have $\sup \sigma(\RR^{\kappa_0}_{\Gamma_0,V}) > 1$ provided that
 % ------------- %
 $$ %\begin{equation} \label{difflimit}
 \lim_{\eta\to 0} (\psi_\eta, \DD^{\kappa_0}_{\Gamma,V} \psi_\eta) > 0,
 $$ %\end{equation}
 % ------------- %
and using the choice of the function $h_\eta$, in combination with the dominant convergence theorem, we find that this is equivalent to
 % ------------- %
 $$ %\begin{equation} \label{eta->0}
 \int_{\R^2} \D s\D s' \int_{-a}^a \int_{-a}^a \D u\D u'\, g_0(u)\, \DD^{\kappa_0}_{\Gamma,V}(s,u;s',u')\, g_0(u') > 0,
 $$ %\end{equation}
 % ------------- %
however, in view of \eqref{g_0} the integrated function is nothing but $\CC^{\kappa_0}_{\Gamma,V}$, hence the last inequality yields the condition \eqref{BSsufficient}.
\end{proof}

In contrast to the results of Section~\ref{s:asympt}, the sufficient condition \eqref{BSsufficient} is of a quantitative nature. Indeed, given $\Gamma$ and $V$, we are able to find $\kappa_0$ and the respective integral can be evaluated. We have already mentioned that the argument of the Macdonald function in the straight case is
 % ------------- %
 $$ %\begin{equation} \label{eta->0}
 |x_0(s,u)-x_0(s',u')| = \big[ (s-s')^2 + (u-u')^2\big]^{1/2},
 $$ %\end{equation}
 % ------------- %
while for a general curve we find using \eqref{reconstruction} with $s'=s_0$ and \eqref{strip} that the squared distance is given by the formula
 % ------------- %
 \begin{align} %\label{distance}
 & |x(s,u)-x(s',u')|^2 \nonumber \\ &= |\Gamma(s)-\Gamma(s')|^2 + u^2 + u'^2 -2uu'\cos\beta(s,s') + 2(u\cos\beta(s,s')-u')\int_{s'}^s \sin\beta(\xi,s')\,\D\xi, \nonumber
 \end{align}
 % ------------- %
where the first term on the right-hand side equals
 % ------------- %
 $$ %\begin{equation} \label{curvdistance}
 |\Gamma(s)-\Gamma(s')|^2 = \int_{s'}^s\, \int_{s'}^s \cos\beta(\xi,\xi')\, \D\xi\,\D\xi'.
 $$ %\end{equation}
 % ------------- %

%%%%%%%%%%%%%%%%%%%%%%%%%%%%%%%%%%%%%%%%%%%%%%%%%
\section{Another existence result}\label{another}
\setcounter{equation}{0}

While we know the explicit form of the integral in \eqref{BSsufficient}, its evaluation may be complicated. Even without it, however, we can use Theorem~\ref{thm:boundstate} to make conclusions about the existence of a discrete spectrum. As an example, let us show a result which may be regarded as a counterpart to Proposition~\ref{prop:scaling}:

 % ------------- %
\begin{proposition} %\label{prop:smallwidth}
Let $\VV_{\epsilon_0}$ be the family of potentials $V$ satisfying assumption \eqref{asse} and such that $\inf\sigma(h_V)=\epsilon_0$. Then to any $\epsilon_0>0$ there exists an $a_0=a_0(\epsilon_0)$ such that $\sigma_\mathrm{disc}(H_{\Gamma,V}) \ne \emptyset$ holds for all $V\in\VV_{\epsilon_0}$ with $\mathrm{supp}\,V\subset[-a_0,a_0]$.
\end{proposition}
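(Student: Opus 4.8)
The plan is to apply Theorem~\ref{thm:boundstate} and to verify the sufficient condition \eqref{BSsufficient} for every $V\in\VV_{\epsilon_0}$ once the support $[-a_0,a_0]$ is narrow enough. Throughout, $\kappa_0=\sqrt{-\epsilon_0}$ is fixed by the choice of $\epsilon_0$ (recall from \eqref{infsph} that this number is negative). Writing $\D\rho(u):=\phi_0(u)V(u)\,\D u$, the left-hand side of \eqref{BSsufficient} becomes
\[
I(V)=\frac{1}{2\pi}\int_{\R^2}\D s\,\D s'\int\!\!\int \D\rho(u)\,\D\rho(u')\,\Big[(1+u\gamma(s))^{1/2}(1+u'\gamma(s'))^{1/2}K_0(\kappa_0 D)-K_0(\kappa_0 D_0)\Big],
\]
with $D=|x(s,u)-x(s',u')|$ and $D_0=[(s-s')^2+(u-u')^2]^{1/2}$, and the goal is to show $I(V)>0$ uniformly over the family.

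The structural point is that as $a_0\to0$ the transverse variables are confined to $[-a_0,a_0]$, so $D\to|\Gamma(s)-\Gamma(s')|$, $D_0\to|s-s'|$ and the Jacobian factors tend to $1$, all with errors $\OO(a_0)$ that depend only on $a_0$ and $\Gamma$, not on the profile of $V$. Since $K_0$ is strictly decreasing and the chord never exceeds the arc, $|\Gamma(s)-\Gamma(s')|\le|s-s'|$, the bracket tends to the nonnegative kernel $K_0(\kappa_0|\Gamma(s)-\Gamma(s')|)-K_0(\kappa_0|s-s'|)$, which is strictly positive on a set of positive measure because $\Gamma$ is not straight. Hence to leading order $I(V)=\frac{m^2}{2\pi}\big(\mathcal I_\Gamma+o(1)\big)$, where $m:=\int\D\rho$ and
\[
\mathcal I_\Gamma:=\int_{\R^2}\big[K_0(\kappa_0|\Gamma(s)-\Gamma(s')|)-K_0(\kappa_0|s-s'|)\big]\,\D s\,\D s'>0
\]
is precisely the leaky-wire binding integral underlying Proposition~\ref{prop:scaling}; its finiteness follows from the integrable (logarithmic) singularity and exponential decay of $K_0$ together with assumption \eqref{assc} (augmented, if the curvature support is noncompact, by the mild decay used in Proposition~\ref{prop:scaling}).

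The main obstacle is uniformity over the whole family $\VV_{\epsilon_0}$: one must bound $m$ away from zero by a constant independent of the shape of $V$. Here I would use that $\phi_0$ solves $-\phi_0''+\kappa_0^2\phi_0=V\phi_0$, whence the representation $\phi_0(u)=\frac{1}{2\kappa_0}\int \ee^{-\kappa_0|u-u'|}V(u')\phi_0(u')\,\D u'$ already noted in the analysis of the straight case. For $u,u'\in[-a_0,a_0]$ the exponential lies in $[\ee^{-2\kappa_0 a_0},1]$, so $\phi_0$ is nearly constant on the support with relative variation $\OO(a_0)$; combined with the $L^2(\R)$-normalization of $\phi_0$, whose mass sits almost entirely in the exponential tails, this forces $\phi_0\to\sqrt{\kappa_0}$ on $[-a_0,a_0]$ and $\int V\to2\kappa_0$, hence $m\to2\kappa_0^{3/2}$, all uniformly in $V\in\VV_{\epsilon_0}$. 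With this uniform lower bound on $m$ and the uniform $\OO(a_0)$ control of the Jacobian and off-axis corrections---the only genuinely delicate part being the neighbourhood of the diagonal $s=s'$, where the singularity of $K_0$ has to be absorbed by the integrable estimate---one concludes that $I(V)\ge\frac{m^2}{2\pi}(\mathcal I_\Gamma-o(1))>0$ for all $a_0$ below a threshold $a_0(\epsilon_0)$, and Theorem~\ref{thm:boundstate} then yields $\sigma_\mathrm{disc}(H_{\Gamma,V})\ne\emptyset$.
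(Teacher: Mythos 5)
Your argument follows the same route as the paper: invoke Theorem~\ref{thm:boundstate}, integrate out the longitudinal variables first, and observe that the resulting transverse kernel converges, as the support shrinks, to the strictly positive quantity $\int_{\R^2}\big[K_0(\kappa_0|\Gamma(s)-\Gamma(s')|)-K_0(\kappa_0|s-s'|)\big]\,\D s\,\D s'$, positivity coming from $|\Gamma(s)-\Gamma(s')|\le|s-s'|$ with strict inequality on a set of positive measure and the monotonicity of $K_0$. The one substantive difference is that you declare a uniform lower bound on $m=\int\phi_0V\,\D u$ to be ``the main obstacle'' and spend a paragraph deriving $m\to 2\kappa_0^{3/2}$ from the $\delta$-limit of the ground state. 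This is superfluous: writing the left-hand side of \eqref{BSsufficient} as $\frac{1}{2\pi}\int\!\!\int\phi_0(u)V(u)\,F(u,u')\,V(u')\phi_0(u')\,\D u\,\D u'$ with $F$ the $V$-independent function obtained by the $s,s'$ integration, the paper simply notes that $F$ is continuous with $F(0,0)>0$, hence positive on some square $(-a_0,a_0)^2$ determined by $\Gamma$ and $\kappa_0$ alone; since the measure $\phi_0V\,\D u$ is nonnegative and nonzero, the integral is then positive for \emph{every} $V\in\VV_{\epsilon_0}$ supported there, with no quantitative information about $\int\phi_0V$ needed --- in your own expansion $I(V)=\frac{m^2}{2\pi}(\mathcal I_\Gamma+o(1))$ the factor $m^2$ multiplies the error as well as the main term, so only $m\neq 0$ matters and the sign is decided by the bracket. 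Your version is not wrong (the claimed limits for $\phi_0$ and $\int V$ do hold in the narrow-support regime), but it misidentifies where the uniformity in $V$ comes from and adds work the continuity argument makes unnecessary; on the other hand, you are more explicit than the paper about the finiteness of the limiting integral (integrable logarithmic singularity on the diagonal, decay off it), a point the paper passes over by asserting that $F$ is well defined.
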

 % ------------- %
\begin{proof}
With the properties of the functions involved in mind, the integration in \eqref{BSsufficient} can be performed in any order; we rewrite thus the expression in question as
 % ------------- %
 \begin{equation} \label{reformulated}
 \frac{1}{2\pi}\,\int_{-a}^a \int_{-a}^a \phi_0(u)V(u)\,F(u,u')\, V(u')\phi_0(u)\,\D u\D u',
 \end{equation}
 % ------------- %
where
 % ------------- %
 \begin{align*}
F(u,u') := & \int_{\R^2} \big[((1+u\gamma(s))^{1/2}\,K_0(\kappa_0|x(s,u)-x(s',u')|)\,(1+u'\gamma(s'))^{1/2} \\[.3em] & \qquad -  K_0(\kappa_0|x_0(s,u)- x_0(s',u')|)\,\big]\, \D s\D s'.
 \end{align*}
 % ------------- %
The function $F(\cdot,\cdot)$ is well defined as long as assumption \eqref{assd} is satisfied and continuous. Furthermore, we have
 % ------------- %
 \begin{equation} \label{onthecurve}
F(0,0) = \int_{\R^2} \big[K_0(\kappa_0|\Gamma(s)-\Gamma(s')|) -  K_0(\kappa_0|s-s'|)\big]\, \D s\D s' > 0.
 \end{equation}
 % ------------- %
To see that this is the case, recall that $\Gamma$ is parametrized by the arc length so that $|\Gamma(s)-\Gamma(s')| \le |s-s'|$, and because the curve is not straight by assumption, there is an open set on which the inequality is sharp; this yields the inequality \eqref{onthecurve} since $K_0(\cdot)$ is decreasing in $(0,\infty)$. The continuity implies the existence of a neighborhood $(-a_0,a_0) \times (-a_0,a_0)$ of $(0,0)$ on which $F(u,u')$ is positive, and that in view of \eqref{reformulated} in combination with the positivity of $\phi_0V$ concludes the proof.
\end{proof}

%%%%%%%%%%%%%%%%%%%%%%%%%%%%%%%%%%%%%%%%%%%%%
\section{Concluding remarks} %\label{s:concl}
\setcounter{equation}{0}

In contrast to the cases of hard-wall waveguides and leaky wires, where \emph{any} bending gives rise -- under appropriate asymptotic straightness requirement -- to a nonempty discrete spectrum, the present analysis does not yield such a universal result. The question whether all bent soft waveguides, in particular the wide and shallow ones, do bind, remains open, and it is possible that the Birman-Schwinger technique is not an optimal tool to address it. True, it produced the universal result for leaky wires \cite{EI01} but it might have been a lucky coincidence. Recall also that there are other geometric perturbations for which the question about weakly coupled bound states is more subtle, see \cite{EV97} and notes to Sec.~6.4. in \cite{EK15}.

Apart from this question, the above discussion brings to mind numerous modifications and extensions of the present problem for which the \emph{soft waveguide program} might be an appropriate name:
 % ------------- %
\begin{enumerate}[(i)]
\setlength{\itemsep}{-1.5pt}
 % ------------- %
\item Potential channels in which the generating curve \emph{is not smooth}, as well as related \emph{polygonal potential channels}. The simplest example is a flat-bottom channel in the form of a broken strip. One expects that it will have a discrete spectrum for any channel depth provided the angle is sharp enough, while the answer in the general case is not clear. Note that for a cross-shaped potential channel, related to the question (iv) below, such a discrete spectrum exists independently of the ditch depth as it was shown by a variational method in \cite{EKP18}\footnote{The aim of that paper was to investigate two interacting particles on a halfline, but by symmetry considerations this is equivalent a single particle in crossed potential channels. Let us add that it is not for the first time when a link was made between a two-body system and a two-dimensional waveguide, cf. the caricature model of meson confinement in \cite{LLM86}.}. Unfortunately, this fact alone does not allow us to make conclusions about a single broken channel.
 % ------------- %
\item Tubular potential channels \emph{in three dimensions}. In view of \cite[Thms~1.3. and 10.3]{EK15} one can expect the validity of asymptotic results analogous to those of Section~\ref{s:asympt}. If the channel profile lacks the rotational symmetry with respect to its axis $\Gamma$, one can also expect additional effects coming from the channel \emph{torsion} which could give rise to an effective repulsive interaction in analogy with the results in \cite{EKK08} and \cite[Sec.~1.7]{EK15}
 % ------------- %
\item \emph{Local perturbations} of potential channels, coming from variation either of their depth or the width. The question about the existence of a discrete spectrum is easy to answer if such a perturbation is sign-definite, the mentioned result of \cite{EV97} suggests that in general it may be harder.
 % ------------- %
\item Potential channels of a more complicated geometry, in first place \emph{branched ones} where the `axis' $\Gamma$ is a metric graph. If the `ends' of such a channel system have the same profile being (asymptotically) straight, the essential spectrum threshold is determined by that of a single semi-infinite channel; the existence question can be sometimes answered using the minimax principle, provided there is a part of the potential in the form of a suitable bent or polygonal channel. Of course, to have the problem well defined one must specify the potential in the vicinity of the graph vertices because the spectrum would depend on it. The simplest example to consider is the flat-bottom guide when the potential equals $-V_0$ at the points the distance of which from $\Gamma$ is less than $a$ and zero otherwise.
 % ------------- %
\item In addition to the discrete spectrum existence, one is interested in the \emph{number of eigenvalues} and their other properties in dependence on the system geometry. Of particular interest are the \emph{weakly bound states} corresponding to mild geometric perturbations. One such question is whether the gap between the ground state and the continuum in slightly bent potential channels would be proportional to the fourth power of the bending angle as it is the case for hard-wall tubes \cite[Thm.~6.3]{EK15} and leaky wires \cite{EKo15}.
 % ------------- %
\item Various \emph{spectral optimization} problems come to mind. For instance, if the potential channel is finite and $\Gamma$ is a loop of a fixed length, one can ask about the shape which makes the principal eigenvalue maximal; in analogy with \cite[Prop.~3.2.1 and Thm.~10.6]{EK15} we conjecture that a sharp maximum is reached by a circular shape.
 % ------------- %
\item The discrete spectrum is not the only interesting feature of such system, another one concerns the \emph{scattering} in a bent or locally perturbed potential channel; the first task here is to establish the existence and completeness of wave operators as it is done for hard-wall tubes and leaky wires, cf. \cite[Sec.~2.1, Thm.~10.4]{EK15} and \cite{Di20, EKo19}. Furthermore, if the profile of the channel is narrow and sufficiently deep so that $h_V$ has more than one bound state, one can expect the presence of \emph{resonances} near the corresponding higher thresholds in analogy with \cite{Ne97} and \cite[Thm.~2.6]{EK15}.
 % ------------- %
\item Another extension to three dimensions concerns \emph{potential layers}, that is potentials of a fixed transverse profile built over an infinite surface $\Sigma$ in $\R^3$, different from a plane but asymptotically flat in a suitable sense. Here we know that the discrete spectrum of the hard-wall layer is nonempty if the total Gauss curvature of $\Sigma$ is non-positive \cite[Thm.~4.1]{EK15}, in the opposite case we have partial results only \cite[Sec.~4.2]{EK15}. In the spirit of Proposition~\ref{prop:hardwall} we can establish the discrete spectrum existence for potential layers with the profile deep enough, while in the regime different from the asymptotic one, the question is open.
 % ------------- %
\item A substantial difference between the tubes and layers is that in the latter case the spectrum depends on the \emph{global} geometry of the interaction support. A nice illustration is provided by \emph{conical} hard-wall layers \cite{ET10} and leaky surfaces \cite{BEL14} which both have an infinite discrete spectrum accumulating logarithmically at the continuum threshold, even in the situations when the generating surfaces lacks a rotational symmetry \cite{OP18}. We noted in the introduction a recent result \cite{EKP20} showing that the spectrum of a cylindrical potential layer\footnote{The channel profile in this paper is supposed to have mirror symmetry, while the other assumptions are less restrictive than here, in particular, the support of $V$ may not be compact and the potential need not be sign-definite.} has the same behavior. One can conjecture that also potential layers with a different geometry might behave as their hard-wall counterparts, such as treated in \cite{EL20}, irrespective of the channel profile.
 % ------------- %
\item Another subject of interest concerns the influence of external fields. If a two-dimensional hard-wall tube is exposed to a local magnetic field perpendicular to the plane, the corresponding Hamiltonian satisfies a Hardy-type inequality \cite{EK05} that prevents the existence of weakly bound states; the question is what such a perturbation would do with a soft waveguide. On the other hand, if the magnetic field is homogeneous, both the essential and the discrete spectrum change; recall that the question whether curvature-induced bound states in hard-wall waveguides would survive a strong magnetic field remains open. At the same time, a homogeneous field induces a transport along the potential channel analogous to that studied, for instance, in \cite{FGW00, HS08}; the question is the stability of the corresponding edge currents with respect to various perturbations.
 % ------------- %
\item In \emph{periodic waveguides} one is interested primarily in the absolute continuity of the spectrum and the existence of spectral gaps. As for the latter, having again Proposition~\ref{prop:hardwall} in mind, one can conjecture that the gaps would exist for profiles deep an narrow enough. The (global) absolute continuity is in the hard-wall case proved in the two-dimensional situation \cite{SW02}, however, even for periodic leaky wires it remains an open problem.
 % ------------- %
\item All the above listed problems concerned the one-particle situation. A completely new area opens when we consider a system of \emph{many particles} interacting mutually, for instance, due the charges they carry, confined in a soft waveguide. In the hard-wall case, for instance, a condition is known \cite[Thm.~3.7]{EK15} under which more than one particle cannot be bound. In the soft case one can expect that the same would happen for a large enough particle charge, but the specific form of such a condition remains to be found.
 % ------------- %
\end{enumerate}
 % ------------- %

This list is by no means complete but we prefer to stop here with the hope that we convinced the reader that this area offers a large number of interesting questions.

%%%%%%%%%%%%%%%%%%%%%%%%%%%%%%
\subsection*{Acknowledgements}

The author is grateful to the referees for useful comments. The research was supported in part by the European Union within the project CZ.02.1.01/0.0/0.0/16 019/0000778.

%%%%%%%%%%%%%%%%%%%%%%%%%%%
\section*{References}

\end{document}